\theoremstyle{plain}
\newtheorem{theorem}{Theorem} 
\newtheorem{corollary}[theorem]{Corollary}
\newtheorem{conjecture}[theorem]{Conjecture}
\newtheorem{lemma}[theorem]{Lemma}
\newtheorem{question}[theorem]{Question}
\newtheorem*{claim*}{Claim}
\theoremstyle{definition}
\newtheorem{remark}[theorem]{Remark}
\newcommand{\comment}[1]{}
\newcommand{\bdry}{\ensuremath{\partial}}
\begin{document}
\baselineskip 14pt

\title{A note on the concordance of fibered knots}

\author{Kenneth L. Baker}
\address{Department of Mathematics, University of Miami, 
Coral Gables, FL 33146, USA}
\email{k.baker@math.miami.edu}


\thanks{KB is partially supported by a grant from the Simons Foundation (\#209184 to Kenneth L.\ Baker).}

\begin{abstract}
Either fibered knots supporting the tight contact structure are unique in their smooth concordance class or there exists a fibered counterexample to the Slice-Ribbon Conjecture.
\end{abstract}
\maketitle

Rudolph questioned whether algebraic knots are linearly independent in the concordance group \cite{rudolphquestion}.  We conjecture something stronger.
\begin{conjecture}\label{conj:concordance}
Let $K_0$ and $K_1$ be fibered knots in $S^3$ supporting the tight contact structure.  If $K_0$ and $K_1$ are concordant, then $K_0 = K_1$.
\end{conjecture}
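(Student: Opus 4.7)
The plan is to establish the conjecture conditional on the Slice--Ribbon Conjecture, so that unconditionally one obtains the dichotomy asserted in the abstract. Given concordant fibered knots $K_0, K_1$ supporting the tight contact structure, form $K := K_0 \# \overline{K_1}$, where $\overline{K_1}$ denotes the concordance inverse (mirror of the reverse) of $K_1$. Then $K$ is smoothly slice, and since the connect sum of fibered knots is fibered, $K$ is itself fibered, with fiber $F = F_0 \natural F_1$ and monodromy $\phi = \phi_0 \natural \phi_1^{-1}$, where $F_i$ and $\phi_i$ denote the fiber and monodromy of $K_i$. By Hedden's characterization ($K_i$ supports the tight contact structure iff $\tau(K_i) = g(K_i)$) together with the concordance invariance of $\tau$, I get $g(K_0) = g(K_1)$, so $F_0$ and $F_1$ are abstractly homeomorphic.

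Now the clean dichotomy splits off: either $K$ is a slice fibered knot that is not ribbon---already a fibered counterexample to the Slice--Ribbon Conjecture---or $K$ is ribbon. Assuming the latter, I would invoke the theorem of Casson--Gordon that the monodromy of a fibered (homotopically) ribbon knot extends over a $3$-dimensional handlebody: there is a handlebody $H$ with $\partial H = F$ and a homeomorphism $\Phi : H \to H$ with $\Phi|_{\partial H} = \phi$. The remaining task is to leverage this handlebody extension, together with tight support of both $K_0$ and $K_1$, to force $K_0 = K_1$.

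That last step is the main obstacle. The natural approach is to exploit the sign structure of $\phi$: after positive stabilization, $\phi_0$ lies in the monoid generated by right-handed Dehn twists while $\phi_1^{-1}$ lies in the opposite monoid, so $\phi = \phi_0 \natural \phi_1^{-1}$ is a highly constrained mixed-sign product. A handlebody extension of such a product ought to produce a pairing, via compressing disks of $H$, between the right-handed Dehn twist factors of $\phi_0$ and the left-handed Dehn twist factors of $\phi_1^{-1}$, forcing the two factorizations to match up. Equivalently, phrased in terms of the Heegaard Floer contact invariant and the Giroux correspondence, the handlebody extension should yield an obstruction to simultaneous non-vanishing of the contact invariants associated to $K_0$ and $\overline{K_1}$ unless these invariants agree; agreement of contact invariants on a common fiber then forces $\phi_0 = \phi_1$ up to conjugation and hence $K_0 = K_1$. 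Pinning down this matching---whether through an explicit surgery argument on $H$ using the meridian disk system or through a Floer-theoretic incompatibility---is the delicate heart of the proof.
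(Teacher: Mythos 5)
Your setup is sound and matches the paper's framing: reduce to showing that if $K_0 \# -K_1$ is ribbon then $K_0 = K_1$ (so that a slice-but-not-ribbon $K_0\#-K_1$ would be a fibered counterexample to Slice--Ribbon), and use Hedden's characterization together with concordance invariance of $\tau$ to get $g(K_0) = g(K_1)$. But there is a genuine gap at exactly the point you flag as ``the delicate heart of the proof.'' The Casson--Gordon handlebody extension of the monodromy of a fibered ribbon knot is a correct input, yet neither of your proposed mechanisms for extracting $K_0 = K_1$ from it works as stated. The ``pairing of Dehn twist factors via compressing disks'' is not an established technique and has no clear formulation: positive stabilizations destroy any canonical factorization, and a handlebody extension of $\phi_0 \natural \phi_1^{-1}$ does not induce a bijection between twist factors. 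The Floer-theoretic variant is worse: the contact invariant is a single class in $\widehat{HF}$ and is very far from determining the monodromy up to conjugation, so ``agreement of contact invariants forces $\phi_0 = \phi_1$'' is false. Nothing in your sketch rules out, say, two distinct tight fibered knots of the same genus whose difference happens to be ribbon.

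The paper closes this gap by a different route: Miyazaki's Theorem 5.5 on homotopy ribbon concordance of fibered knots, which is precisely a criterion for when ribbonness of $K_0 \# -K_1$ forces $K_0 = K_1$. Its key hypothesis (Condition (1)) is verified by showing that tight fibered knots are \emph{minimal} with respect to homotopy ribbon concordance among fibered knots in $S^3$: if $K \geq J$ with both fibered, Gordon's Lemma 3.4 (which persists for homotopy ribbon concordance, since the Alexander polynomial detects the genus of a fibered knot) gives $J = K$ or $g(K) > g(J)$, while tightness gives $g(K) = \tau(K) = \tau(J) \leq g(J)$, eliminating the strict inequality. A further technical point you would also need to address is that Miyazaki's Condition (1) a priori concerns fibered knots in arbitrary homology spheres; the paper uses Geometrization (residual finiteness of $3$--manifold groups) and Miyazaki's Lemma 1.3 to reduce to knots in $S^3$ so that the minimality lemma applies. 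If you want to salvage your approach, the honest path is to replace your handlebody-matching heuristic with this genus-monotonicity argument, which is where the tightness hypothesis actually does its work.
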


While we expect an affirmative answer to Rudolph's question and our conjecture in the locally-flat category, in this note we content ourselves with the smooth category.

Knots $K_0$ and $K_1$ in $S^3$ are (smoothly) {\em concordant} if there exists an embedded  smooth annulus in $S^3 \times[0,1]$ connecting the knots $K_i \subset S^3 \times\{i\}$.   This is equivalent to $K_0 \# -K_1$ being (smoothly) slice, i.e.\ the boundary of a properly embedded smooth disk in $B^4$. Such a disk is a ribbon disk if it has no maxima with respect to the standard radial function on $B^4$. The still unsettled Slice-Ribbon Conjecture proposes that every slice knot actually bounds a ribbon disk \cite{fox}.

Any knot $K$ in $S^3$ satisfies $\tau(K) \leq g_4(K) \leq g(K)$.  Here $\tau(K)$ is the Ozsv\'{a}th-Szab\'{o}'s concordance invariant \cite{ozsztau}, $g_4(K)$ is the smooth slice genus, and $g(K)$ is the Seifert genus.  For strongly quasipositive knots these three quantities are all equal according to Livingston \cite{livingston}.  Hedden showed that for a fibered knot $K$ in $S^3$, being strongly quasipositive, supporting the tight contact structure, and satisfying $\tau(K)=g_4(K)=g(K)$ are all equivalent \cite{heddennotionsofpositivity}.   It follows that if $K_0$ and $K_1$ are concordant fibered knots in $S^3$ supporting the tight contact structure, then $g(K_0)=g(K_1)$.

\medskip
Conjecture~\ref{conj:concordance} is supported by Theorem~\ref{thm:ribbon} below which is based on Miyazaki's work on homotopy ribbon concordance of fibered knots \cite{miyazaki-nonsimpleribbon}.

For knots $K_i$ in homology $3$--spheres $M_i$, $i=0,1$, $(M_1, K_1)$ is {\em homotopy ribbon concordant} to $(M_0,K_0)$ (denoted $(M_1, K_1) \geq (M_0,K_0)$ or just $K_1 \geq K_0$) if some $4$--manifold $X$ is a homology $S^3 \times [0,1]$ containing an embedded smooth annulus $A$ such that $(\bdry X, A \cap \bdry X) = (M_1, K_1) \sqcup -(M_0,K_0)$ with surjective $\pi_1 (M_1 - K_1) \to \pi_1(X-A)$ and injective  $\pi_1 (M_0 - K_0) \to \pi_1(X-A)$.  Homotopy ribbon concordance is a generalization of ribbon concordance \cite[Lemma 3.1]{gordon-ribbonconcordaneofknots}.

Also note that the concordance invariance of $\tau$ holds more generally  whenever the concordance annulus between $(S^3,K_0)$ and $(S^3, K_1)$ is in a homology $S^3 \times [0,1]$ with boundary $(S^3, K_1) \sqcup -(S^3,K_0)$ (via Theorem 1.1 \cite{ozsztau}).  Thus, for knots in $S^3$, $K_1 \geq K_0$ implies $\tau(K_1) = \tau(K_0)$.

For short, let us say a knot is {\em tight fibered} if it is a fibered knot supporting a tight contact structure.
\begin{lemma}\label{lem:ribbonconcmin}
Let $K$ be a tight fibered knot in $S^3$.  Then $K$ is minimal with respect to homotopy ribbon concordance among fibered knots in $S^3$.
\end{lemma}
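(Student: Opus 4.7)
The plan is to argue that any fibered $K'$ with $K \geq K'$ must equal $K$, by combining three ingredients already in play: the concordance invariance of $\tau$ in a homology $S^3 \times [0,1]$, Hedden's identity $\tau = g$ for tight fibered knots, and Miyazaki's rigidity for homotopy ribbon concordance of fibered knots.

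Suppose $K \geq K'$ with $K'$ a fibered knot in $S^3$. By the version of Ozsv\'{a}th-Szab\'{o}'s invariance noted just before the lemma, $\tau(K) = \tau(K')$. Hedden's theorem gives $\tau(K) = g(K)$, and the general bound $\tau(K') \leq g_4(K') \leq g(K')$ then yields $g(K) \leq g(K')$. For the reverse inequality, the divisibility $\Delta_{K'} \mid \Delta_K$ of Alexander polynomials under homotopy ribbon concordance (Gordon's theorem as extended by Miyazaki), combined with the fact that $\Delta$ is monic of degree $2g$ for a fibered knot, forces $g(K') \leq g(K)$. Hence $g(K) = g(K')$, and $K'$ itself is tight fibered by Hedden.

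To conclude $K = K'$, I invoke Miyazaki's theorem that a homotopy ribbon concordance between fibered knots of equal Seifert genus is trivial. The equal-genus condition, together with the $\pi_1$-surjectivity and $\pi_1$-injectivity built into the definition of homotopy ribbon concordance, forces the fiber surfaces of $K$ and $K'$ to cobound a product region inside the ambient homology cobordism, which identifies the monodromies and hence the knots. The main obstacle is this last step: the first two are routine consequences of results already stated in the excerpt, whereas the substance of the lemma is precisely that Miyazaki's equal-genus rigidity for fibered knots applies to a homotopy ribbon concordance in a homology $S^3 \times [0,1]$ rather than a genuine concordance in $S^3 \times [0,1]$.
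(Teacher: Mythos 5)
Your argument is correct and is essentially the paper's proof: both hinge on the chain $g(K)=\tau(K)=\tau(K')\leq g(K')$ (valid since $\tau$ is invariant under concordance in a homology $S^3\times[0,1]$) together with Gordon's fibered-knot rigidity for homotopy ribbon concordance. The only cosmetic difference is that you split that rigidity into two pieces (Alexander-polynomial divisibility forcing $g(K')\leq g(K)$, then an equal-genus rigidity statement), whereas the paper cites Gordon's Lemma~3.4 once in its dichotomous form --- either $K=K'$ or $g(K)>g(K')$ --- which the $\tau$ inequality immediately contradicts.
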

\begin{proof}
If a fibered knot $K$ in $S^3$ is homotopically ribbon concordant to a fibered knot $J$ in $S^3$, then Gordon's Lemma 3.4 \cite{gordon-ribbonconcordaneofknots} (which holds for homotopy ribbon concordance) implies either $J=K$ or $g(K)>g(J)$ because the Alexander polynomial detects the genus of a fibered knot.   However since $K$ is a tight fibered knot with a homotopy ribbon concordance to $J$, we have $g(K) = \tau(K) = \tau(J) \leq g(J)$.  Hence $J=K$.
\end{proof}

Using Lemma~\ref{lem:ribbonconcmin}, 
Theorem 5.5 of Miyazaki \cite{miyazaki-nonsimpleribbon} specializes to yield:
\begin{theorem}\label{thm:ribbon}
Let $K_0$ and $K_1$ be tight fibered knots in $S^3$.   If $K_0 \# -K_1$ is ribbon, then $K_0 = K_1$.  \qed
\end{theorem}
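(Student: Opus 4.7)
The plan is to use Miyazaki's Theorem 5.5 to extract, from a ribbon disk for $K_0 \# -K_1$, a homotopy ribbon concordance between $K_0$ and $K_1$, and then to invoke Lemma~\ref{lem:ribbonconcmin} to force the two knots to coincide.

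Starting from a ribbon disk $D \subset B^4$ for $K_0 \# -K_1$, I would first perform the standard tubing construction along the decomposing $2$-sphere that realizes the connected sum: this converts $D$ into a properly embedded annulus $A$ in a homology $S^3 \times [0,1]$ with boundary $(S^3, K_1) \sqcup -(S^3, K_0)$. The ribbon condition on $D$ (no local maxima with respect to the radial function on $B^4$) ensures that $A$ carries a handle structure with no $2$-handles when built from one side. This is exactly the input required by Miyazaki's Theorem 5.5, which, in the setting where both $K_0$ and $K_1$ are fibered, exploits the fiber surfaces and the structure of their monodromies to upgrade the annulus $A$ to a bona fide homotopy ribbon concordance. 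That is, the fiberedness is used to verify the surjectivity and injectivity conditions on the fundamental groups of the complements that appear in the definition; the outcome is a homotopy ribbon concordance, say $K_1 \geq K_0$.

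Once $K_1 \geq K_0$ is in hand and $K_1$ is a tight fibered knot, Lemma~\ref{lem:ribbonconcmin} applies directly and forces $K_1 = K_0$. If Miyazaki's theorem instead produces $K_0 \geq K_1$, the symmetric application of the lemma to $K_0$ reaches the same conclusion. The main obstacle lies inside Miyazaki's theorem itself, namely the verification of the $\pi_1$-theoretic conditions that distinguish a mere concordance annulus from a genuine homotopy ribbon concordance; once those are granted, the specialization from fibered knots to tight fibered knots effected by Lemma~\ref{lem:ribbonconcmin} is essentially immediate, which is why Theorem~\ref{thm:ribbon} can be stated with a \qed in place of a proof.
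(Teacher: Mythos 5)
Your strategy---extract a homotopy ribbon concordance between $K_0$ and $K_1$ from the ribbon disk and then apply Lemma~\ref{lem:ribbonconcmin}---does not match what Miyazaki's Theorem 5.5 provides, and the step where you produce that homotopy ribbon concordance is a genuine gap. Tubing a ribbon disk for $K_0 \# -K_1$ along the decomposing sphere does yield a smooth concordance annulus between $K_0$ and $K_1$, but there is no reason for it to be a homotopy ribbon concordance. The ribbon condition on the disk gives only $K_0 \# -K_1 \geq 0$ (a homotopy ribbon concordance of the connected sum to the unknot), and the standard conversion of this into a concordance between $K_0$ and $K_1$ composes the reverse of a ribbon concordance $K_0 \# -K_1 \# K_1 \geq K_0$ with the ribbon concordance $K_0 \# -K_1 \# K_1 \geq K_1$; the resulting annulus has both births and deaths, and the $\pi_1$ surjectivity/injectivity conditions are not inherited. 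Whether ribbonness of $K_0 \# -K_1$ implies that $K_0$ and $K_1$ are (homotopy) ribbon concordant is not known---this is exactly the difficulty Miyazaki's machinery is built to circumvent. His Theorem 5.5 does not output a homotopy ribbon concordance between the summands: it takes as hypotheses the ribbon disk together with his Condition (1) on each summand (roughly, that the monodromy does not extend over a nontrivial compression body, which rules out the situation of his Lemma 1.2) and concludes $K_0 = K_1$ outright.

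Consequently your proposal omits the actual content of the paper's proof, which is the verification that tight fibered knots satisfy a usable form of Condition (1). Lemma~\ref{lem:ribbonconcmin} shows that a tight fibered knot is minimal under $\geq$ among fibered knots in $S^3$, but Condition (1) a priori concerns fibered knots in arbitrary homology spheres: if the monodromy of $K$ extended over a nontrivial compression body, one would obtain $(S^3,K) \geq (M',K')$ for some fibered knot $K'$ in a homology sphere $M'$. The paper bridges this by invoking Miyazaki's Lemma 1.3 together with Geometrization (residual finiteness of $3$--manifold groups) to force $M' = S^3$, after which Lemma~\ref{lem:ribbonconcmin} applies and Condition (1) may be replaced by minimality among fibered knots in $S^3$. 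Without this reduction, or some other verification of Condition (1), the appeal to Theorem 5.5 of \cite{miyazaki-nonsimpleribbon} is not justified.
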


\begin{proof}
The purpose of Condition (1) of Theorem 5.5 of  \cite{miyazaki-nonsimpleribbon} is to conclude the hypotheses of \cite[Lemma~1.2]{miyazaki-nonsimpleribbon} do not hold.   That is, if a fibered knot $K$ is minimal with respect to $\geq$ among fibered knots in homology spheres,
then the monodromy of  $K$ does not extend across a non-trivial compression body with connected positive and negative boundaries.

Using Perelman's resolution of the Geometrization Conjecture \cite{perelman1,perelman2,perelman3} which implies that $3$--manifolds groups are residually finite, we can specialize Condition (1) so that we may invoke Lemma~\ref{lem:ribbonconcmin}.  
If the hypotheses of \cite[Lemma~1.2]{miyazaki-nonsimpleribbon} hold for a fibered knot $K$ in $S^3$, then there is a fibered knot $K'$ in a homology sphere $M'$ such that $(S^3, K) \geq (M',K')$.  Furthermore, $M'=S^3$ by \cite[Lemma~1.3]{miyazaki-nonsimpleribbon} and Geometrization; see the remark after \cite[Lemma~1.3]{miyazaki-nonsimpleribbon}. 
  Hence we may replace Condition (1) of Theorem 5.5 of  \cite{miyazaki-nonsimpleribbon} with the condition that a fibered knot $K$ is minimal with respect to $\geq$ among fibered knots in $S^3$.

Now since $K_0 \# -K_1$ being ribbon implies $K_0 \# -K_1 \geq 0$, Lemma~\ref{lem:ribbonconcmin} and the use of our specialized Condition (1) in Theorem 5.5 of  \cite{miyazaki-nonsimpleribbon} yields the result.  
\end{proof}

\begin{corollary}
Either Conjecture~\ref{conj:concordance} is true or the Slice-Ribbon Conjecture is false. \qed
\end{corollary}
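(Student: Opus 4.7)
The plan is to prove the contrapositive of the disjunction: assuming the Slice-Ribbon Conjecture holds, I will show that Conjecture~\ref{conj:concordance} holds. So suppose every smoothly slice knot in $S^3$ bounds a ribbon disk, and let $K_0$ and $K_1$ be tight fibered knots in $S^3$ that are smoothly concordant.

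The first step is to translate concordance into a statement about a single knot: as recalled in the introduction, $K_0$ and $K_1$ being concordant is equivalent to $K_0 \# -K_1$ being smoothly slice. By the assumed Slice-Ribbon Conjecture, this connected sum is therefore ribbon.

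At this point I would simply invoke Theorem~\ref{thm:ribbon}: since $K_0$ and $K_1$ are both tight fibered knots in $S^3$ and $K_0 \# -K_1$ is ribbon, the theorem gives $K_0 = K_1$, which is exactly the conclusion of Conjecture~\ref{conj:concordance}. Thus if the Slice-Ribbon Conjecture is true, so is Conjecture~\ref{conj:concordance}, and the corollary follows.

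There is essentially no obstacle here; all the substantive work has been absorbed into Theorem~\ref{thm:ribbon} (and, through it, into Lemma~\ref{lem:ribbonconcmin} and Miyazaki's Theorem~5.5). The only thing one should double-check is that the class of tight fibered knots is closed under the operations involved, namely that $-K_1$ is tight fibered whenever $K_1$ is so that $K_0 \# -K_1$ lies in the setting of Theorem~\ref{thm:ribbon}; but this is immediate from mirroring and the behavior of open book decompositions under connected sum, so no additional argument is required.
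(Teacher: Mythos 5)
Your main argument is exactly the paper's (implicit) one: assume Slice--Ribbon, so the concordance of $K_0$ and $K_1$ makes $K_0 \# -K_1$ slice and hence ribbon, and Theorem~\ref{thm:ribbon} then forces $K_0 = K_1$. That is correct and complete. However, your closing ``double-check'' is both unnecessary and false: Theorem~\ref{thm:ribbon} only requires $K_0$ and $K_1$ themselves to be tight fibered, not that $-K_1$ or $K_0 \# -K_1$ be tight fibered, so no closure property is needed. And the claim that $-K_1$ is tight fibered whenever $K_1$ is cannot be right: for a tight fibered knot $\tau(K_1) = g(K_1)$, while $\tau(-K_1) = -\tau(K_1)$, so the mirror satisfies $\tau(-K_1) = g(-K_1)$ only when $K_1$ is unknotted. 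Delete that remark and the proof stands as is.
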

Tetsuya Abe and Keiji Tagami observed that since connected sums of tight fibered knots are again tight fibered knots, we can similarly approach Rudolph's question about algebraic knots for the broader class of prime tight fibered knots. 
\begin{corollary}[Lemma~3.1 \cite{abe}]
Either the prime tight fibered knots are linearly independent in the concordance group or the Slice-Ribbon Conjecture is false. \qed
\end{corollary}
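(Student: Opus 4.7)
The plan is to assume the Slice-Ribbon Conjecture holds and derive the linear independence of prime tight fibered knots in the smooth concordance group by converting any nontrivial linear relation into a ribbon equivalence to which Theorem~\ref{thm:ribbon} applies, and then appealing to the unique prime factorization of knots under connected sum.

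In detail, suppose for contradiction that there exist distinct prime tight fibered knots $K_1, \dots, K_n \subset S^3$ and integers $a_1, \dots, a_n$, not all zero, such that $\#_{i=1}^n a_i K_i$ is slice. Separating positive and negative coefficients, set $a_i^+ := \max(a_i, 0)$ and $a_i^- := \max(-a_i, 0)$, and let $K^+ := \#_{i} a_i^+ K_i$ and $K^- := \#_i a_i^- K_i$, with the convention that an empty connected sum is the unknot. Then $K^+ \# -K^-$ is isotopic to $\#_i a_i K_i$ and is therefore slice, so by Slice-Ribbon it is ribbon. Because tight fibered knots in $S^3$ are closed under connected sum (the Abe--Tagami observation noted just before the corollary) and the unknot is itself a tight fibered knot, both $K^+$ and $K^-$ are tight fibered. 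Theorem~\ref{thm:ribbon} then forces $K^+ = K^-$.

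The final step invokes Schubert's unique prime decomposition theorem for knots under connected sum: the equality $K^+ = K^-$ forces the multisets of prime factors on the two sides to coincide. Since the $K_i$ are distinct primes, this means $a_i^+ = a_i^-$ for every $i$. But $a_i^+$ and $a_i^-$ are nonnegative integers at least one of which vanishes, so $a_i^+ = a_i^- = 0$ for every $i$, contradicting the assumption that some $a_i \neq 0$. There is essentially no remaining obstacle beyond the machinery already cited; the only point requiring care is the edge case where all nonzero $a_i$ share a single sign, which is the reason for treating the unknot as a (vacuously) tight fibered knot so that Theorem~\ref{thm:ribbon} applies uniformly.
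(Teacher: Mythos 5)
Your proof is correct and follows exactly the route the paper intends (and leaves implicit): split the linear combination into $K^+ \# -K^-$, use the Abe--Tagami closure of tight fibered knots under connected sum to apply Theorem~\ref{thm:ribbon}, and finish with Schubert's unique prime decomposition. Your attention to the edge case where all nonzero coefficients share a sign (so one side is the unknot, which is tight fibered) is a worthwhile detail that the paper does not spell out.
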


\begin{remark}
Indeed, as Katura Miyazaki has pointed out to me, Theorem~\ref{thm:ribbon} and its consequences hold for any class of fibered knots in $S^3$ that satisfy the specialized Condition (1), not just the class of tight fibered knots \cite{miyazaki-personalcomm}.
\end{remark}

Viewing strongly quasipositive knots as a generalization of tight fibered knots, one may wonder if Conjecture~\ref{conj:concordance} actually holds for these knots too. Matt Hedden, however, has shown me a simple construction that demonstrates Conjecture~\ref{conj:concordance} is not true for strongly quasipositive knots in general \cite{hedden-personalcomm}. Take any two non-isotopic ribbon concordant knots whose maximum Thurston-Bennequin numbers are at least some integer $N$.   Then for any integer $n\leq N$, the $n$--twisted positive-clasped Whitehead doubles of the two knots will be strongly quasipositive and ribbon concordant but not isotopic.   Rudolph determines the strong quaispositivity of these Whitehead doubles \cite[102.4]{rudolph2}.  By running $(\mbox{solid torus}, \mbox{pattern knot}) \times [0,1]$ along a regular neighborhood of a concordance annulus, satellite operations preserve the relations of concordance and ribbon concordance, see for example  \cite[Lemmas 3.1 and 3.2]{kawauchi}.  Finally, by considering JSJ-decompositions of knot exteriors \cite{JSXdecomp,XXJdecomp}, one observes that a satellite operation on non-isotopic knots {\em usually} produces non-isotopic knots.  Since we are taking twisted Whitehead doubles, our resulting satellite knots will indeed be non-isotopic \cite{kouno-motegi}.  

Hedden suggests that positive knots are a restricted subset of strongly quasipositive knots containing many non-fibered knots for which Theorem~\ref{thm:ribbon} and Conjecture~\ref{conj:concordance} potentially generalize.  Stoimenow has presented a related conjecture \cite{stoimenow}.
\begin{question}
For positive knots $K_0$ and $K_1$, if $K_0 \# -K_1$ is ribbon then must $K_0 = K_1$?   What if $K_0 \# -K_1$ is slice?
\end{question}

Let us end with a couple of brief remarks on algebraic concordance. Hedden-Kirk-Livingston have shown that there are many pairs of  tight fibered knots (sums of algebraic knots) that are algebraically concordant and yet not topologically locally-flat concordant \cite{HKL}.
Meier also points out that there are many pairs of positive non-fibered knots that are algebraically concordant and yet not topologically locally-flat concordant \cite{meier-personalcomm}; as reported by KnotInfo and its Concordance Calculator, the positive knots $3_1 \# 9_2$ and $9_{23}$ give  an example \cite{knotinfo, lowcrossingconcordance}. 

\bigskip

We gratefully acknowledge Tetsuya Abe, Cameron Gordon, Matt Hedden, Jeff Meier, Katura Miyazaki, Kimihiko Motegi, and Nikolai Saveliev for useful conversations and their interest.   We are particularly indebted to Matt Hedden for sharing his knowledge about the failure of Conjecture~\ref{conj:concordance} for strongly quasipositive knots in general.  Furthermore, we deeply thank Katura Miyazaki for pointing out and correcting an error in our original argument for Theorem~\ref{thm:ribbon}.   

This work was partially supported by Simons Foundation Collaboration Grant \#209184 to Kenneth L.\ Baker.

\bibliographystyle{amsalpha}
\bibliography{fiberedconcordance}

\providecommand{\bysame}{\leavevmode\hbox to3em{\hrulefill}\thinspace}
\providecommand{\MR}{\relax\ifhmode\unskip\space\fi MR }
\providecommand{\MRhref}[2]{%
  \href{http://www.ams.org/mathscinet-getitem?mr=#1}{#2}
}
\providecommand{\href}[2]{#2}
\begin{thebibliography}{HKL12}

\bibitem[AT15]{abe}
Tetsuya Abe and Keiji Tagami, \emph{Fibered knots with the same $0$--surgery
  and the slice-ribbon conjecture}, preprint (2015).

\bibitem[CKL13]{lowcrossingconcordance}
Julia Collins, Paul Kirk, and Charles Livingston, \emph{The concordance
  classification of low crossing number knots}, 2013.

\bibitem[CL14]{knotinfo}
J.~C. Cha and C.~Livingston, \emph{Knotinfo: Table of knot invariants}, {\tt
  http://www.indiana.edu/~knotinfo}, September 25, 2014.

\bibitem[Fox62]{fox}
R.~H. Fox, \emph{Some problems in knot theory}, Topology of 3-manifolds and
  related topics ({P}roc. {T}he {U}niv. of {G}eorgia {I}nstitute, 1961),
  Prentice-Hall, Englewood Cliffs, N.J., 1962, pp.~168--176. \MR{0140100 (25
  \#3523)}

\bibitem[Gor81]{gordon-ribbonconcordaneofknots}
C.~McA. Gordon, \emph{Ribbon concordance of knots in the {$3$}-sphere}, Math.
  Ann. \textbf{257} (1981), no.~2, 157--170. \MR{634459 (83a:57007)}

\bibitem[Hed10]{heddennotionsofpositivity}
Matthew Hedden, \emph{Notions of positivity and the {O}zsv\'ath-{S}zab\'o
  concordance invariant}, J. Knot Theory Ramifications \textbf{19} (2010),
  no.~5, 617--629. \MR{2646650 (2011j:57020)}

\bibitem[Hed14]{hedden-personalcomm}
\bysame, Personal Communication, September 23, 2014.

\bibitem[HKL12]{HKL}
Matthew Hedden, Paul Kirk, and Charles Livingston, \emph{Non-slice linear
  combinations of algebraic knots}, J. Eur. Math. Soc. (JEMS) \textbf{14}
  (2012), no.~4, 1181--1208. \MR{2928848}

\bibitem[Joh79]{XXJdecomp}
Klaus Johannson, \emph{Homotopy equivalences of {$3$}-manifolds with
  boundaries}, Lecture Notes in Mathematics, vol. 761, Springer, Berlin, 1979.
  \MR{551744 (82c:57005)}

\bibitem[JS78]{JSXdecomp}
William Jaco and Peter~B. Shalen, \emph{A new decomposition theorem for
  irreducible sufficiently-large {$3$}-manifolds}, Algebraic and geometric
  topology ({P}roc. {S}ympos. {P}ure {M}ath., {S}tanford {U}niv., {S}tanford,
  {C}alif., 1976), {P}art 2, Proc. Sympos. Pure Math., XXXII, Amer. Math. Soc.,
  Providence, R.I., 1978, pp.~71--84. \MR{520524 (80j:57008)}

\bibitem[Kaw80]{kawauchi}
Akio Kawauchi, \emph{On links not cobordant to split links}, Topology
  \textbf{19} (1980), no.~4, 321--334. \MR{584558 (82d:57011)}

\bibitem[KM93]{kouno-motegi}
Masaharu Kouno and Kimihiko Motegi, \emph{On satellite knots}, Math.\ Proc.\
  Camb.\ Phil.\ Soc. \textbf{115} (1993), 219--228.

\bibitem[Liv04]{livingston}
Charles Livingston, \emph{Computations of the {O}zsv\'ath-{S}zab\'o knot
  concordance invariant}, Geom. Topol. \textbf{8} (2004), 735--742
  (electronic). \MR{2057779 (2005d:57019)}

\bibitem[Mei14]{meier-personalcomm}
Jeffrey Meier, Personal Communication, September 25, 2014.

\bibitem[Miy94]{miyazaki-nonsimpleribbon}
Katura Miyazaki, \emph{Nonsimple, ribbon fibered knots}, Trans. Amer. Math.
  Soc. \textbf{341} (1994), no.~1, 1--44. \MR{1176509 (94c:57013)}

\bibitem[Miy14]{miyazaki-personalcomm}
\bysame, Personal Communication, October 7, 2014.

\bibitem[OS03]{ozsztau}
Peter Ozsv{\'a}th and Zolt{\'a}n Szab{\'o}, \emph{Knot {F}loer homology and the
  four-ball genus}, Geom. Topol. \textbf{7} (2003), 615--639. \MR{2026543
  (2004i:57036)}

\bibitem[Per02]{perelman1}
G.~Perelman, \emph{The entropy formula for the {R}icci flow and its geometric
  applications}, arXiv:math.DG/0211159 (2002).

\bibitem[Per03a]{perelman3}
\bysame, \emph{Finite extinction time for the solutions to the {R}icci flow on
  certain three- manifolds}, arXiv:math.DG/0307245 (2003).

\bibitem[Per03b]{perelman2}
\bysame, \emph{Ricci flow with surgery on three-manifolds},
  arXiv:math.DG/0303109 (2003).

\bibitem[Rud76]{rudolphquestion}
Lee Rudolph, \emph{How independent are the knot-cobordism classes of links of
  plane curve singularities?}, Notices Amer.\ Math.\ Soc. \textbf{23} (1976),
  410.

\bibitem[Rud05]{rudolph2}
Lee Rudolph, \emph{Knot theory of complex plane curves}, Handbook of knot
  theory, Elsevier B. V., Amsterdam, 2005, pp.~349--427. \MR{2179266
  (2007b:57016)}

\bibitem[Sto14]{stoimenow}
Alexander Stoimenow, \emph{Concordance of positive knots}, Talk at Satellite
  Conference of Seoul ICM 2014 ``Knots and Low Dimensional Manifolds'', August
  23, 2014.

\end{thebibliography}
\end{document}